\newtheorem{thm}{Theorem}[section]
\newtheorem{lem}{Lemma}[section]
\newtheorem{cor}{Corollary}[section]
\newtheorem{defn}{Definition}[section]
\numberwithin{equation}{section}
\begin{document}

\begin{center}
{\large{\textbf{CHARACTERISTICS OF ALMOST CONFORMAL RICCI SOLITONS ON SASAKIAN MANIFOLD}}}
\end{center}
\vspace{0.1 cm}
\begin{center}By\end{center}
\begin{center}

{Dipen~~Ganguly\footnote{The first author is the corresponding author.}, Nirabhra~~Basu$^2$ and Arindam~~Bhattacharyya$^3$}
\end{center}
\vskip 0.3cm
\begin{center}
$^{1,3}$Department~of~Mathematics\\
Jadavpur~University,\\
Kolkata-700032,~India.\\
E-mail: dipenganguly1@gmail.com\\
E-mail: bhattachar1968@yahoo.co.in\\
$^{2}$Department~of~Mathematics\\
The Bhawanipur~Education~Society~College,\\
Kolkata-700020,~India.\\
E-mail: nirabhra.basu@thebges.edu.in
\end{center}
\vskip 0.3cm
\begin{center}
\textbf{Abstract}
\end{center}\par
\medskip
In this paper, we characterize the potential function $f$ of the almost conformal gradient Ricci soliton on a Sasakian manifold in terms of the non-dynamical scalar field $p$ and deduce the necessary condition for the potential function $f$ to be constant. Furthermore, a relation between $\lambda$ and the potential function $f$ has been established. Finally, we prove a sufficient condition for an almost conformal Ricci soliton to be an almost conformal gradient Ricci soliton and also a characterization of the soliton in terms of shrinking, steady or expanding has been done.\par
\medskip
\begin{flushleft}
\textbf{Key words :} Ricci soliton, conformal Ricci soliton, Gradient Ricci soliton, Almost conformal gradient Ricci soliton, Sasakian manifold.\par
\end{flushleft}
\medskip
\begin{flushleft}
\textbf{2010 Mathematics Subject Classification :} 53C44, 53C25, 53D10. \par
\end{flushleft}
\medskip
\medskip
\medskip
\section{\textbf{Introduction}}
G. Perelman's \cite{P} solution to prove Poincare conjecture with the help of Ricci flow has crossed almost fifteen years and then after many mathematicians extended the study with different ideas like conformal Ricci flow, conformal Ricci soliton, almost Ricci soliton etc, though Ricci flow was introduced by R. S. Hamilton in 1982. The Ricci flow equation follows as
\begin{equation}
\frac{\partial g}{\partial t}=-2Ric.
\end{equation}
\par
A smooth manifold $M$ equipped with a Riemannian metric $g$ is said to be a Ricci soliton, if for some constant $\lambda$, there exist a smooth vector field $X$ on $M$ satisfying
\begin{equation}
Ric+\frac{1}{2}\mathcal{L}_{X}g=\lambda g,
\end{equation}
where $\mathcal{L}_{X}$ denotes the Lie derivative and $Ric$ is the Ricci tensor. The Ricci soliton is called shrinking if $\lambda >0$, steady if $\lambda =0$ and expanding if $\lambda <0$.
Ricci soliton can also be viewed as natural generalization of Einstein metric which moves only by an one-parameter group of diffeomorphisms and scaling \cite{H2,CC}. Again a Ricci soliton is called a gradient Ricci soliton \cite{H1} if the concerned vector field X in the equation  is the gradient of some smooth function $f$ and the equation becomes
\begin{equation}
Ric + \nabla(\nabla f) = \lambda g,
\end{equation}
where $\lambda$ is a constant.
\par
\medskip
The concept of Ricci almost soliton was propounded by S. Pigola et al., \cite{PR} as a natural extension of the gradient Ricci soliton. Later on a substantial amount of study has been done by various authors like R. Sharma \cite{S}, A. Barros et al., \cite{BBR} and many others on Ricci almost soliton. The Ricci almost soliton equation on a Riemannian manifold $(M,g)$ is given by
\begin{equation}
2Ric +\mathcal{L}_{X}g = 2\lambda g,
\end{equation}
 where $\lambda$ is a smooth function on $M$. In particular if $\lambda$ is constant, then the equation $(1.4)$ is same as the Ricci soliton equation $(1.2)$. Again if for some smooth function $f$, the vector field $X$ is the gradient of the function $f$, then the soliton is called a gradient Ricci almost soliton. Therefore by taking $X=\nabla f$ in the previous equation, we have the equation of the gradient Ricci almost soliton as
\begin{equation}
Ric + \nabla(\nabla f) = \lambda g,
\end{equation}
 where $\lambda$ is a real-valued smooth function on $M$.
 \par
\medskip
A. E. Fischer \cite{F}, in 2004, came up with a new idea of conformal Ricci flow which is a modified version of the Hamilton's Ricci flow equation. The unit volume constraint has very significant role in the classical theory of Hamilton's Ricci flow equation but in the case of the conformal Ricci flow equation the main difference is the scalar curvature constraint. For this reason this new Ricci flow equation is defined as the conformal Ricci flow as the scalar curvature constraint plays pivotal role in conformal geometry. Let $(M,g)$ be a closed connected oriented $n$-dimensional, $n\geq 3$, smooth manifold; then  the conformal Ricci flow equation on $(M,g)$ is given by
\begin{equation}
\frac{\partial g}{\partial t}+2(Ric+\frac{g}{n})=-pg,
\end{equation}
\begin{equation}
r(g)=-1, \nonumber
\end{equation}
where p is the non dynamical (time dependent) scalar field. This geometric evolution equation is analogous to the Navier-Stokes equation, which is a very well-known topic of fluid mechanics.
\par
\medskip
 Recently In 2015, N. Basu et al., \cite{B1}, have introduced the concept of conformal Ricci flow which satisfies the conformal Ricci flow equation. The equation follows as
\begin{equation}
\mathcal{L}_{X}g+2Ric=[2\lambda -(p+\frac{2}{n})]g.
\end{equation}
\par
 T. Dutta et al., \cite{TD} have introduced the notion of almost conformal gradient Ricci soliton and established some curvature identities for almost conformal gradient Ricci soliton. The equation of an almost conformal gradient Ricci soliton is given by
 \begin{equation}
\mathcal{L}_{X}g+2Ric=[2\lambda -(p+\frac{2}{n})]g,
\end{equation}
where $\lambda$ is a real-valued smooth function on $M$ and the vector field $X$ is the gradient of some smooth function.\par
\medskip
D. G. Prakasha et. al., \cite{DG}, in 2016, studied Ricci solitons and $\eta$-Ricci solitons within the framework of para-Sasakian geometry. In 2011, C. He and M. Zhu \cite{HZ} showed that a Sasakian metric satisfying the gradient Ricci soliton equation is necessarily Einstein. After that, N. Basu and A. Bhattacharyya \cite{B2} have found out the nature of the potential function $f$ of almost Ricci solitons in Sasakian manifold. Motivated by the above studies, here we have characterized the potential function $f$ for a Sasakian manifold satisfying the almost conformal gradient Ricci soliton equation. This paper is organised as follows: first we discuss some basic preliminaries of Sasakian manifolds in section-2. Finally in section-3, our main results on Sasakian manifolds admitting almost conformal gradient Ricci solitons, have been established.
\medskip
\medskip
\medskip
\section{\textbf{Basic Preliminaries}}\par
\medskip
Let $M^{2n+1}$ be a $(2n+1)$ dimensional smooth manifold and let there exist a $(1,1)$ tensor field $\phi$, a vector field $\xi$ and a global 1-form $\eta$ on M such that
\begin{equation}
\eta(\xi)=1
\end{equation}
and
\begin{equation}
\phi^2 X=-X + \eta(X)\xi,
\end{equation}
for any vector field X on $M^{2n+1}$, then we say that M has an almost contact structure $(\phi,\xi,\eta)$. The manifold $M^{2n+1}$ equipped with this almost contact structure $(\phi,\xi,\eta)$ is called an almost contact manifold \cite{Bla}. \par
\medskip
Here $\xi$ is called the Reeb vector field or the characteristic vector field. For an almost contact structure $(\phi,\xi,\eta)$ the following relations hold; \par $\phi\circ\xi=0$ and $\eta\circ\phi=0$.\par
\medskip
\begin{lem}
  [\cite{Bla}] \emph{Every almost contact structure $(\phi,\xi,\eta)$ on a manifold $M^{2n+1}$ admits a Riemannian metric $g$ satisfying;}
  \begin{align}
 1) g(\phi X,\phi Y) &=g(X,Y)-\eta(X)\eta(Y), \nonumber\\
 2) \eta(X) &=g(X,\xi).
  \end{align}
\end{lem}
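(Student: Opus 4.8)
The plan is to imitate the standard construction of a Hermitian metric compatible with an almost complex structure, adapted here to the almost contact setting by treating the Reeb direction $\xi$ separately from the contact distribution $\mathcal{D}=\ker\eta$. I would begin with an \emph{arbitrary} Riemannian metric $k$ on $M^{2n+1}$, which exists by the usual partition-of-unity argument, and then ``average'' $k$ against the structure tensors $\phi,\xi,\eta$ in two stages so as to enforce the two required identities one at a time.

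First I would record the elementary consequences of $(2.2)$ that $\phi^{2}$ equals minus the projection onto $\mathcal{D}$ along $\xi$; that is, writing $PX:=X-\eta(X)\xi$ one has $\phi^{2}X=-PX$, together with $\phi\xi=0$, $\eta\circ\phi=0$ and $\eta(\phi^{2}X)=0$. Using these, I would define an intermediate metric
\[
h(X,Y)=k(\phi^{2}X,\phi^{2}Y)+\eta(X)\eta(Y),
\]
which is manifestly symmetric and (since each summand is nonnegative, with $h(X,X)=0$ forcing $PX=0$ and $\eta(X)=0$, hence $X=0$) positive definite, and which already satisfies identity $2)$ because $\phi^{2}\xi=0$ gives $h(X,\xi)=\eta(X)$. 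The second stage is to symmetrize $h$ with respect to $\phi$ by setting
\[
g(X,Y)=\tfrac{1}{2}\bigl[h(X,Y)+h(\phi X,\phi Y)+\eta(X)\eta(Y)\bigr],
\]
and to claim that this $g$ is the compatible metric sought.

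It then remains to verify three things, all of which reduce to substituting the identities above: positive definiteness of $g$, which is immediate from that of $h$; identity $2)$, using $\phi\xi=0$ and $\eta(\phi X)=0$ so that the middle term drops out and the two copies of $\tfrac12\eta(X)$ recombine; and identity $1)$, where one computes $g(\phi X,\phi Y)$, uses $\eta(\phi X)=0$ to kill the new term, and rewrites $h(\phi^{2}X,\phi^{2}Y)$ as $h(X,Y)-\eta(X)\eta(Y)$ by exploiting $\phi^{2}=-P$ and the fact that $h$ sees only the $\mathcal{D}$-component of its arguments apart from the explicit $\eta(X)\eta(Y)$ term. Smoothness of $g$ is automatic, since it is built pointwise and algebraically from $k,\phi,\xi,\eta$. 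The one place demanding genuine care is this last verification of $1)$: one must keep precise track of when a vector lies in $\mathcal{D}$ so that $P$ may be inserted or deleted, and confirm that the extra $\eta(X)\eta(Y)$ term placed in the definition of $g$ is exactly what makes $g(\phi X,\phi Y)$ agree with $g(X,Y)-\eta(X)\eta(Y)$. Apart from that, the argument is a routine averaging and I foresee no conceptual obstacle.
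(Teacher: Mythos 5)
Your construction is correct and is essentially the standard two-stage averaging argument from Blair's book, which is exactly the source the paper cites for this lemma (the paper itself gives no proof). The verifications you outline all go through: $h(PX,PY)=k(PX,PY)=h(X,Y)-\eta(X)\eta(Y)$ since $\eta\circ P=0$, and this is precisely what makes identity $1)$ close up after the $\phi$-symmetrization.
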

\medskip
 Then the metric $g$ is called compatible with the almost contact structure $(\phi,\xi,\eta)$ and the manifold $M^{2n+1}$ with the almost contact metric structure $(\phi,\xi,\eta,g)$ is called an almost contact metric manifold.\par
\medskip
An almost contact manifold $M^{2n+1}$ together with the almost contact structure $(\phi,\xi,\eta)$ is said to be a Sasakian manifold or a normal contact metric manifold if;
\begin{equation}
[\phi,\phi](X,Y)+2d\eta(X,Y)\xi=0, \nonumber
\end{equation}
where; $[\phi,\phi]$ denotes the Nijenhuis torsion tensor field of $\phi$ and is given by;
\begin{equation}
[\phi,\phi](X,Y)=\phi^2[X,Y]+[\phi X,\phi Y]-\phi([\phi X,Y])-\phi([X,\phi Y]). \nonumber
\end{equation}
Or equivalently, an almost contact metric manifold $M^{2n+1}$ with the structure $(\phi,\xi,\eta,g)$ is called a Sasakian manifold [see\cite{BG}] if ;
\begin{equation}
(\nabla_{X}\phi)Y=g(X,Y)\xi -\eta(Y)X, \nonumber
\end{equation}
where $\nabla$ is the Levi-Civita connection on $M^{2n+1}$.
\begin{thm}
  [\cite{BG}] \emph{A contact manifold $M^{2n+1}$ with the contact metric structure $(\phi,\xi,\eta,g)$ is a Sasakian manifold if and only if, the Killing vector field $\xi$ satisfies the relation;}
\begin{equation}
R(Y,\xi)Z=\eta(Z)Y-g(Y,Z)\xi,
\end{equation}
\emph{$\forall Y,Z\in TM$ and $R$ is the Riemannian curvature tensor of the manifold $M^{2n+1}$.}
\end{thm}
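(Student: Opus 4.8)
The statement is an equivalence, so I would prove the two implications separately. In either direction the working identity $\nabla_X\xi=-\phi X$ is at hand. On a Sasakian manifold it follows from the defining relation $(\nabla_X\phi)Y=g(X,Y)\xi-\eta(Y)X$: put $Y=\xi$ to get $\phi\nabla_X\xi=X-\eta(X)\xi$, apply $\phi$ once more, and use $g(\nabla_X\xi,\xi)=\tfrac{1}{2}X\,g(\xi,\xi)=0$. On a contact metric manifold the Killing condition on $\xi$ is precisely the $K$-contact condition, which also yields $\nabla_X\xi=-\phi X$ (and conversely, $\nabla_X\xi=-\phi X$ together with skew-symmetry of $\phi$ makes $\xi$ Killing, so the phrase ``the Killing vector field $\xi$'' is consistent in both directions).

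\emph{Sasakian $\Rightarrow$ curvature identity.} I would compute $R(X,Y)\xi$ straight from its definition,
\[
R(X,Y)\xi=\nabla_X\nabla_Y\xi-\nabla_Y\nabla_X\xi-\nabla_{[X,Y]}\xi=-\nabla_X(\phi Y)+\nabla_Y(\phi X)+\phi[X,Y],
\]
expand $\nabla_X(\phi Y)=(\nabla_X\phi)Y+\phi\nabla_XY$, substitute the Sasakian identity, and use $[X,Y]=\nabla_XY-\nabla_YX$; the $\phi$-terms cancel and one obtains $R(X,Y)\xi=\eta(Y)X-\eta(X)Y$. To pass to $R(Y,\xi)Z$, pair against an arbitrary $W$ and use the block symmetry and antisymmetry of the curvature tensor, $g(R(Y,\xi)Z,W)=g(R(Z,W)Y,\xi)=-g(R(Z,W)\xi,Y)$, and then insert $R(Z,W)\xi=\eta(W)Z-\eta(Z)W$; nondegeneracy of $g$ gives $R(Y,\xi)Z=\eta(Z)Y-g(Y,Z)\xi$.

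\emph{Curvature identity $\Rightarrow$ Sasakian.} Assume $(\phi,\xi,\eta,g)$ is contact metric, $\xi$ is Killing, and $R(Y,\xi)Z=\eta(Z)Y-g(Y,Z)\xi$. Then $M$ is $K$-contact, so $\nabla_X\xi=-\phi X$, and the computation above — which used only this fact and torsion-freeness — runs backwards to give the formal identity $R(X,Y)\xi=(\nabla_Y\phi)X-(\nabla_X\phi)Y$; running the block-symmetry step of the previous paragraph in reverse turns the hypothesis into $R(X,Y)\xi=\eta(Y)X-\eta(X)Y$. Now set $S(X,Y):=(\nabla_X\phi)Y-g(X,Y)\xi+\eta(Y)X$, so that $S\equiv0$ is exactly the Sasakian condition. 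Equating the two expressions for $R(X,Y)\xi$ yields $(\nabla_Y\phi)X-(\nabla_X\phi)Y=\eta(Y)X-\eta(X)Y$, which rearranges to $S(X,Y)=S(Y,X)$: the defect is symmetric in $X,Y$. On the other hand, $\nabla_X\phi$ is skew-symmetric with respect to $g$ on any almost contact metric manifold, and from this one checks that $g(S(X,Y),Z)$ is skew-symmetric in $Y$ and $Z$. A trilinear form that is symmetric in its first two slots and skew in its last two must vanish (cycle the two symmetries to obtain $g(S(X,Y),Z)=-g(S(X,Y),Z)$); hence $S\equiv0$ and $M$ is Sasakian.

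I expect this last rigidity observation to be the only genuinely non-mechanical point: the forward implication is the routine $\xi$-curvature computation, whereas the converse rests on the fact that on a $K$-contact manifold $\nabla\phi$ is pinned down completely by its $g$-skew part and the value of $R(\cdot,\cdot)\xi$. An alternative route for the converse would be to invoke the structure theory of contact metric manifolds — the general formula for $(\nabla_X\phi)Y$ in terms of $h=\tfrac{1}{2}\mathcal{L}_\xi\phi$ and the tensor $N^{(1)}=[\phi,\phi]+2\,d\eta\otimes\xi$ — and observe that $h=0$ together with the curvature hypothesis forces $N^{(1)}=0$, i.e.\ normality; but the direct tensor argument above seems shorter.
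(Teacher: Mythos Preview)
The paper does not prove this theorem: it is quoted verbatim from Boyer--Galicki \cite{BG} as background and carries no argument in the text. Your proposal is a complete and correct proof of both directions, following the standard route one finds in Blair or Boyer--Galicki: the forward implication via the direct computation of $R(X,Y)\xi$ from $\nabla_X\xi=-\phi X$ and the Sasakian formula for $\nabla\phi$, and the converse via the symmetry/skew-symmetry rigidity of the defect tensor $S(X,Y)=(\nabla_X\phi)Y-g(X,Y)\xi+\eta(Y)X$. Since the paper offers nothing to compare against, there is no divergence of approach to report; your write-up would serve perfectly well as the missing justification, and the tensor-cycling argument you give for the converse is indeed the cleanest way to avoid invoking the full $N^{(1)}$/$h$ machinery.
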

 \par
\medskip
Next, let us consider the distribution $\mathcal{D}\subset TM$ defined by
\begin{equation}
\eta(Y)=g(Y,\xi)=0
\end{equation}
 Then as $\eta$ is a contact 1-form, the distribution $\mathcal{D}$ is nowhere integrable and for any $Y\in TM$ we have $\phi(Y)=\nabla_{Y}\xi\in\mathcal{D}$ since $\xi$ is a Killing vector field. Now, applying the inner product with respect to $Y$, on both sides of the equation (2.4), we get
 $$R(Y,\xi ,Z,Y)=\eta(Z)g(Y,Y)-g(Y,Z)g(\xi ,Y).$$
 Then, using the distribution condition in equation(2.5) this reduces to
 $$R(Y,\xi ,Z,Y)=\eta(Z)g(Y,Y)$$
 and applying equation(2.3) in this we get
 $$R(Y,\xi ,Z,Y)=g(Z,\xi)g(Y,Y).$$
 Now, choosing an orthonormal basis vector for $Y=e_i$ and taking summation over the index $i$ we get
 \begin{equation}
 Ric(\xi ,Z)=2ng(\xi ,Z)
 \end{equation}
So, now we use the above results to prove our main theorem.
\medskip
\medskip
\medskip
\section{\textbf{Main Results}}\par
\medskip
\medskip
In this section we prove our two main theorems on almost conformal gradient Ricci soliton. First we consider a sasakian manifold with the distribution $\mathcal{D}\subset TM$ satisfying the almost conformal gradient Ricci soliton.
\medskip
\begin{thm}
Let $(M^{2n+1},g)$ be a Sasakian manifold satisfying the almost conformal gradient Ricci soliton equation and $\mathcal{D}\subset TM$ be the distribution. Then if $\lambda =\frac{p}{2}$ i.e; if $\lambda$ is half of the conformal pressure $p$; the potential function $f$ is constant, for non-zero function $\lambda$.
\end{thm}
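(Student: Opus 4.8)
The plan is to begin by putting $X=\nabla f$ in the defining equation. Since $\mathcal{L}_{\nabla f}g=2\,\mathrm{Hess}\,f$, the almost conformal gradient Ricci soliton equation becomes
\[
\mathrm{Hess}\,f+Ric=\mu\,g,\qquad \mu:=\lambda-\tfrac12\Bigl(p+\tfrac2n\Bigr),
\]
i.e. $\nabla_Y\nabla f=\mu Y-QY$ for the Ricci operator $Q$. The crucial elementary remark is that the hypothesis $\lambda=\tfrac p2$ makes $\mu=-\tfrac1n$ a \emph{genuine constant}, so that under this hypothesis $(M,g)$ is an honest gradient Ricci soliton; in particular $\nabla\mu=0$, which is exactly what lets the curvature computation close up. So step one is to record this reduction and then differentiate $\nabla_Y\nabla f=\mu Y-QY$ to get the standard soliton curvature identity
\[
R(X,Y)\nabla f=(\nabla_YQ)X-(\nabla_XQ)Y .
\]

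Step two is to feed in the Sasakian structure from Section 2. From equation (2.6), $Q\xi=2n\xi$, hence $(\nabla_XQ)\xi=2n\nabla_X\xi-Q\nabla_X\xi=2n\phi X-Q\phi X$; moreover, since $\xi$ is Killing we have $\mathcal{L}_\xi Ric=0$, and comparing $\mathcal{L}_\xi Q$ with $\nabla_\xi Q$ gives $\nabla_\xi Q=\phi Q-Q\phi$. Putting $Y=\xi$ in the curvature identity and using Theorem 2.1 in the form $R(X,\xi)\nabla f=(\xi f)X-(Xf)\xi$, everything collapses to
\[
\phi\bigl(QX-2nX\bigr)=(\xi f)X-(Xf)\xi .
\]
Applying $\phi$ once more and simplifying with $\phi^2=-I+\eta\otimes\xi$ together with $\eta(QX)=2n\eta(X)$ leaves the clean pointwise identity $QX=2nX-(\xi f)\,\phi X$.

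Step three exploits that $Q$, being the Ricci operator, is $g$-self-adjoint while $\phi$ is $g$-skew-adjoint: the displayed formula is compatible with self-adjointness of $Q$ only at points where $\xi f$ vanishes, so $\xi f\equiv 0$ and $Q=2nI$, i.e. $g$ is Einstein — this is precisely the He--Zhu rigidity quoted in the introduction, recovered here by hand. Finally, substituting $\xi f=0$ and $Q=2nI$ back into $\mathrm{Hess}\,f=(\mu-2n)g$ and using the scalar relation carried by the $\xi$-direction of the soliton equation forces $\mathrm{Hess}\,f\equiv 0$ (this is where the hypothesis $\lambda\neq 0$ enters, to divide through), whence $f$ is constant on the connected manifold $M$. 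I expect the main obstacle to be the bookkeeping in step two: getting the terms $(\nabla_\xi Q)X$ and the $\mathcal{D}$-component $\phi(\nabla f)$ to cancel correctly, for which one genuinely needs both Theorem 2.1 and the Killing property of $\xi$ encoded in equation (2.6).
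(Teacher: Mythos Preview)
Your route is genuinely different from the paper's. The paper never computes $R(X,Y)\nabla f$ or touches the Ricci operator; instead it exploits $\mathcal{L}_\xi(\mathcal{L}_X g)=0$ (coming from $\xi$ being Killing) together with the scalar value $g(\nabla_\xi X,\xi)=\lambda-\tfrac{p}{2}+\tfrac{k}{2}$ to derive $g(\nabla f,Y)=Y\bigl(\tfrac{p}{2}-\lambda\bigr)$ for every $Y$ in the contact distribution $\mathcal D$, and then sets $\lambda=\tfrac p2$ to kill the right-hand side. Your approach via the gradient-soliton curvature identity is sharper: your steps one through three are correct, the bookkeeping with $(\nabla_\xi Q)X$ and $(\nabla_XQ)\xi$ does close up as you anticipate, and you obtain the Einstein condition $Q=2nI$ together with $\xi f=0$, neither of which the paper reaches.

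The gap is in your final paragraph. With $Q=2nI$ established and $\mu=-\tfrac{1}{2n+1}$ already \emph{fixed} by the hypothesis $\lambda=\tfrac p2$, the relation $\mathrm{Hess}\,f=(\mu-2n)g$ has $\mu-2n=-\tfrac{1}{2n+1}-2n\neq 0$; but evaluating at $(\xi,\xi)$ and using $\xi f=0$ and $\nabla_\xi\xi=0$ forces $\mathrm{Hess}\,f(\xi,\xi)=0=\mu-2n$. This is a \emph{contradiction}, not a derivation of $\mathrm{Hess}\,f\equiv 0$: what your argument actually shows is that no Sasakian manifold carries an almost conformal gradient Ricci soliton with $\lambda=\tfrac p2$, so the theorem holds vacuously. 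That is a legitimate (indeed stronger) conclusion, but it is not the mechanism you describe. Moreover, the appeal to ``$\lambda\neq 0$, to divide through'' is spurious: nothing in your chain of implications involves division by $\lambda$, and that hypothesis plays no role anywhere in your argument. Rewrite the last step as an explicit contradiction, or at least acknowledge that this is what is really happening.
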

\begin{proof}
So, here for our concerned Sasakian manifold $(M^{2n+1},g)$ satisfying the almost conformal Ricci soliton equation, we get from equation(1.8)
$$\mathcal{L}_{X}g(\xi ,W)+2Ric(\xi ,W)=[2\lambda -(p+\frac{2}{2n+1})]g(\xi ,W).$$
Then using the value of $Ric$ from equation(2.6) we get
$$\mathcal{L}_{X}g(\xi ,W)=[2\lambda -(p+\frac{2}{2n+1})-4n]g(\xi ,W)=(2\lambda -p+k)g(\xi ,W),$$
where $k=-\frac{2}{2n+1}-4n$ is a fixed constant for this manifold. This implies
$$(2\lambda -p+k)g(\xi ,W)=Xg(\xi ,W)-g([X,\xi],W)-g(\xi ,[X,W])$$
$$=\nabla _Xg(\xi ,W)-g(\nabla _X\xi ,W)+g(\nabla _\xi X,W)-g(\xi ,\nabla _XW)+g(\xi ,\nabla _WX)$$
$$=(\nabla _Xg)(\xi ,W)+g(\nabla _\xi X,W)+g(\xi ,\nabla _WX).$$
Now $\nabla$ being a metric connection $(\nabla _Xg)(\xi ,W)$ vanishes and hence we have
$$g(\nabla _\xi X,W)+g(\xi ,\nabla _WX)=(2\lambda -p+k)g(\xi ,W).$$
Putting $W=\xi$ in the above and simplifying we get
\begin{equation}
g(\nabla _\xi X,\xi)=(\lambda -\frac{p}{2}+\frac{k}{2})g(\xi ,\xi).
\end{equation}
Again, $\xi$ being a Killing vector field, we have
$$(\mathcal{L}_{\xi}(\mathcal{L}_{X}g))(Y,Z)=0$$,
for all $Y,Z$ in $TM$. This eventually implies
$$\mathcal{L}_{\xi}(\mathcal{L}_{X}g)(Y,Z)-\mathcal{L}_{X}g([\xi ,Y],Z)-\mathcal{L}_{X}g(Y,[\xi ,Z])=0.$$
Then after some calculations and using the condition $(\nabla _Xg)(\xi ,W)=0$ we get
\begin{equation}
R(X,\xi,\xi,Y)+\nabla_Yg(\nabla_\xi X,\xi)+g(\nabla_\xi\nabla_\xi X,Y)=0.
\end{equation}
Now, as the vector field $Y$ is orthogonal to $\xi$, using the condition (2.4) in result (2.2) we get
\begin{equation}
R(X,\xi,\xi,Y)=g(\xi,\xi)g(X,Y)-g(X,\xi)g(\xi,Y)=g(X,Y).
\end{equation}
So, combining equations (3.2) and (3.3) we have
$$g(X,Y)+\nabla_Yg(\nabla_\xi X,\xi)+g(\nabla_\xi\nabla_\xi X,Y)=0.$$
Also from equation (3.1) using the value of $g(\nabla_\xi X,\xi)$ in above we obtain
$$g(X,Y)+\nabla_Y(\lambda -\frac{p}{2}+\frac{k}{2})g(\xi ,\xi)+g(\nabla_\xi\nabla_\xi X,Y)=0.$$
As $k$ is a constant and $\eta(\xi)=1$ , the above equation implies
\begin{equation}
g(X,Y)+g(\nabla_\xi\nabla_\xi X,Y)+\nabla_Y(\lambda -\frac{p}{2})=0.
\end{equation}
Again the soliton being gradient almost Ricci soliton, putting $X=\nabla f$, equation (3.4) becomes
\begin{equation}
g(\nabla f,Y)+g(\nabla_\xi\nabla_\xi\nabla f,Y)+\nabla_Y(\lambda -\frac{p}{2})=0.
\end{equation}
\par
Next, observe that from equation (3.1) we can write $\nabla_\xi X=(\lambda -\frac{p}{2}+\frac{k}{2})\xi$ and using the gradient almost soliton condition $X=\nabla f$ in this, we have
\begin{equation}
\nabla_\xi\nabla f=(\lambda -\frac{p}{2}+\frac{k}{2})\xi.
\end{equation}
So, now equations(3.5) and (3.6) together yield
\begin{equation}
g(\nabla f,Y)+g(\nabla_\xi(\lambda -\frac{p}{2}+\frac{k}{2})\xi,Y)+\nabla_Y(\lambda -\frac{p}{2})=0.
\end{equation}
Again we know that, $\nabla_\xi(\lambda -\frac{p}{2}+\frac{k}{2})\xi=(\lambda -\frac{p}{2}+\frac{k}{2})\nabla_\xi\xi+(\xi(\lambda -\frac{p}{2}+\frac{k}{2}))\xi$.
 And as $\nabla_\xi\xi=0$ we have $\nabla_\xi(\lambda -\frac{p}{2}+\frac{k}{2})\xi=(\xi(\lambda -\frac{p}{2}+\frac{k}{2}))\xi$. So now putting this value in equation (3.7) we obtain
\begin{equation}
g(\nabla f,Y)+\xi(\lambda -\frac{p}{2}+\frac{k}{2})g(Y,\xi)+\nabla_Y(\lambda -\frac{p}{2})=0,
\end{equation}
for any vector field $Y$ in the distribution $\mathcal{D}\subset TM$. Thus the result is true for all vector field $Y$ satisfying $g(Y,\xi)=0$ and hence we have from the last equation
\begin{equation}
g(\nabla f,Y)=Y(\frac{p}{2}-\lambda).
\end{equation}
\par
\medskip
Now, if we consider $\lambda=\frac{p}{2}$, i.e; if $\lambda$ is half of the non-dynamical conformal pressue field $p$; we get $g(\nabla f,Y)=0$  $\forall Y\in\mathcal{D}\subset TM$. Hence we can conclude that the potential function $f$ is constant.
\end{proof}\par
\medskip
\medskip
Next, we consider an almost conformal gradient Ricci soliton, on a Sasakian manifold, without the distribution $\mathcal{D}\subset TM$ used in the previous theorem. Here we try to establish a relation between the smooth function $\lambda$ and the potential function $f$ of the soliton. But before going into our main result let us first establish a lemma which will be required later.
\medskip
\begin{lem}
Let $(M^{2n+1},g)$ be a Sasakian manifold satisfying the almost conformal gradient Ricci soliton equation, then the curvature tensor $R(X,Y)Z$ of the manifold satisfies the following relation,
  \begin{equation}
    R(X,Y)Df=(X\lambda)Y-(Y\lambda)X-(\nabla_XQ)Y+(\nabla_YQ)X,
  \end{equation}
  where $Df=grad f$, for some smooth function $f$.
\end{lem}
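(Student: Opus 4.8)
The plan is to start from the almost conformal gradient Ricci soliton equation (1.9), rewrite it as an identity for the Hessian of $f$, and then differentiate covariantly and antisymmetrize, exactly the way one derives the classical curvature identity for gradient Ricci solitons. Writing $Q$ for the Ricci operator (so $\mathrm{Ric}(X,Y)=g(QX,Y)$) and $Df=\mathrm{grad}\,f$, equation (1.9) with $X=\nabla f$ gives $\nabla_X Df = \bigl(\lambda - \tfrac{1}{2}(p+\tfrac{2}{2n+1})\bigr)X - QX$ for every vector field $X$; set $\mu = \lambda - \tfrac{1}{2}(p+\tfrac{2}{2n+1})$, so that $\nabla_X Df = \mu X - QX$. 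Note $\mu$ and $\lambda$ differ by a term whose gradient is $-\tfrac{1}{2}\,\mathrm{grad}\,p$, but since $p$ is (for the purposes of this computation) being carried along, I expect the statement to be using $\lambda$ loosely to mean exactly this $\mu$; I would flag that and write the identity in terms of $\mu$, then observe $(X\mu) = (X\lambda) - \tfrac12(Xp)$ matches whatever convention the paper fixes.

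Next I would compute $\nabla_Y\nabla_X Df$. Differentiating $\nabla_X Df = \mu X - QX$ in the direction $Y$ yields
\begin{equation}
\nabla_Y\nabla_X Df - \nabla_{\nabla_Y X} Df = (Y\mu)X + \mu\,\nabla_Y X - (\nabla_Y Q)X - Q(\nabla_Y X).
\end{equation}
Using $\nabla_{\nabla_Y X}Df = \mu(\nabla_Y X) - Q(\nabla_Y X)$ from the basic identity, the terms $\mu\nabla_Y X$ and $-Q(\nabla_Y X)$ cancel, leaving the clean formula $\nabla_Y\nabla_X Df = (Y\mu)X + \mu\,\nabla_Y X - Q(\nabla_Y X) - (\nabla_Y Q)X + \text{(the piece absorbed)}$; more precisely $\nabla_Y\nabla_X Df = (Y\mu)X - (\nabla_Y Q)X + \nabla_{\nabla_Y X}Df$. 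Then the curvature tensor is $R(X,Y)Df = \nabla_X\nabla_Y Df - \nabla_Y\nabla_X Df - \nabla_{[X,Y]}Df$, and substituting the expression just obtained (and its $X\leftrightarrow Y$ swap) makes the $\nabla_{\nabla_X Y}Df$, $\nabla_{\nabla_Y X}Df$ and $\nabla_{[X,Y]}Df$ terms combine and cancel since $[X,Y]=\nabla_X Y - \nabla_Y X$. What survives is exactly
\begin{equation}
R(X,Y)Df = (X\mu)Y - (Y\mu)X - (\nabla_X Q)Y + (\nabla_Y Q)X,
\end{equation}
which is the claimed (3.11) once we identify $\mu$ with the paper's $\lambda$ in this identity.

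The only real subtlety — and the step I'd be most careful about — is the bookkeeping of the $\mu$ versus $\lambda$ normalization: the soliton constant that appears multiplying $X$ in the Hessian identity is $\lambda - \tfrac12(p+\tfrac{2}{2n+1})$, not $\lambda$ itself, so strictly the coefficient on the right-hand side should carry the extra $-\tfrac12(Xp)Y + \tfrac12(Yp)X$ term unless the authors are treating $p$ as locally constant or absorbing it. Everything else is the standard "differentiate the Hessian equation and antisymmetrize" argument and is purely formal: no Sasakian structure equations (2.1)–(2.6) are needed for this lemma — those will enter only in the theorem that uses it. I would therefore present the derivation in the three displayed steps above, note the normalization convention being used for $\lambda$, and remark that the identity is the analogue for almost conformal gradient solitons of Hamilton's classical curvature identity for gradient Ricci solitons.
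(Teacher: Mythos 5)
Your proposal follows essentially the same route as the paper's own proof of Lemma 3.2: rewrite the soliton equation as $\nabla_X Df = -QX + \bigl[\lambda - (\tfrac{p}{2}+\tfrac{1}{2n+1})\bigr]X$, differentiate covariantly, antisymmetrize in $X$ and $Y$, and substitute into $R(X,Y)Df=\nabla_X\nabla_Y Df-\nabla_Y\nabla_X Df-\nabla_{[X,Y]}Df$, with no Sasakian structure needed. The normalization issue you flag is genuine: in passing from (3.13) to (3.14) the paper differentiates the coefficient $\lambda-(\tfrac{p}{2}+\tfrac{1}{2n+1})$ but records only $(Y\lambda)X$, silently discarding $-\tfrac12(Yp)X$, even though elsewhere (equation (3.9)) $p$ is treated as spatially varying; your version with $\mu=\lambda-\tfrac12(p+\tfrac{2}{2n+1})$ is the correct form of the identity, and the lemma as printed holds only under the tacit assumption that $p$ is constant on $M$.
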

\begin{proof}
  Assuming the manifold $(M^{2n+1},g)$ satisfies the almost conformal gradient Ricci soliton equation, from $(1.8)$ we can write
  \begin{equation}
    \mathcal{L}_{Df}g(X,Y)+2Ric(X,Y)=[2\lambda -(p+\frac{2}{2n+1})]g(X,Y),
  \end{equation}
  where $Df=grad f$, is the soliton vector field in this case, with some smooth function $f$.
   Again it is well known from the definition of Lie derivative that, $\mathcal{L}_{V}g(X,Y)=g(\nabla_XV,Y)+g(X,\nabla_YV)$ and using this formula the above equation $(3.11)$ becomes,
  \begin{equation}
    Hess^f(X,Y)=-Ric(X,Y)+[\lambda -(\frac{p}{2}+\frac{1}{2n+1})]g(X,Y),
  \end{equation}
  where $Hess^f(X,Y)$ is the Hessian operator of the function $f$.\\
   Now from the above equation $(3.12)$ it can be written that,
  \begin{equation}
    \nabla_XDf=-QX+[\lambda -(\frac{p}{2}+\frac{1}{2n+1})]X,
  \end{equation}
  where $Q$ is the Ricci operator given by $g(QX,Y)=Ric(X,Y)$.\\
  Differentiating, both sides of the above equation $(3.13)$, covariantly in the direction of an arbitrary vector field $Y$ we get,
  \begin{equation}
    \nabla_Y\nabla_XDf=-(\nabla_YQ)X-Q(\nabla_YX)+[\lambda -(\frac{p}{2}+\frac{1}{2n+1})]\nabla_YX+(Y\lambda)X.
  \end{equation}
  Swapping the variables $X$ and $Y$ in the previous equation $(3.14)$ yields,
  \begin{equation}
    \nabla_X\nabla_YDf=-(\nabla_XQ)Y-Q(\nabla_XY)+[\lambda -(\frac{p}{2}+\frac{1}{2n+1})]\nabla_XY+(X\lambda)Y.
  \end{equation}
  Also from equation $(3.13)$ replacing $X$ by $[X,Y]$ we have,
   \begin{equation}
    \nabla_{[X,Y]}Df=-Q[X,Y]+[\lambda -(\frac{p}{2}+\frac{1}{2n+1})][X,Y],
  \end{equation}
  where $[X,Y]$ denotes the usual Lie bracket operation.\\
  Now using equations $(3.14)$, $(3.15)$ and $(3.16)$ in the well known formula $R(X,Y)Z=\nabla_X\nabla_Y Z-\nabla_Y\nabla_X Z-\nabla_{[X,Y]}Z$ of Riemannian curvature tensor and after simplification finally we get,
  \begin{equation}
    R(X,Y)Df=(X\lambda)Y-(Y\lambda)X-(\nabla_XQ)Y+(\nabla_YQ)X,\nonumber
  \end{equation}
    which is the desired result and hence completes the proof.
\end{proof}
\medskip
Now we are in a position to establish our next theorem of this section which is as follows:
\medskip
\begin{thm}
Let $(M^{2n+1},g)$ be a Sasakian manifold satisfying the almost conformal gradient Ricci soliton equation, then $(\lambda+f)$ is constant; where $f$ is the potential function of the soliton.
\end{thm}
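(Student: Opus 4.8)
The plan is to contract the curvature identity of the preceding Lemma with the Reeb field $\xi$. Starting from
$$R(X,Y)Df=(X\lambda)Y-(Y\lambda)X-(\nabla_XQ)Y+(\nabla_YQ)X,$$
I would take the inner product with $\xi$. Using the curvature symmetries together with the Sasakian relation $R(X,Y)\xi=\eta(Y)X-\eta(X)Y$ (which follows from $(2.4)$ via the first Bianchi identity) one obtains $g(R(X,Y)Df,\xi)=\eta(X)(Yf)-\eta(Y)(Xf)$. For the two Ricci-operator terms, the facts $Q\xi=2n\xi$ (this is precisely $(2.6)$) and $\nabla_X\xi=\phi X$ give, after a short computation, $g((\nabla_XQ)Y,\xi)=g(2nY-QY,\phi X)$, and symmetrically for the other term. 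Substituting these, the identity becomes the scalar relation
$$\eta(X)(Yf)-\eta(Y)(Xf)=(X\lambda)\eta(Y)-(Y\lambda)\eta(X)-g(2nY-QY,\phi X)+g(2nX-QX,\phi Y).$$

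Next I would set $X=\xi$. Since $\phi\xi=0$ and $2n\xi-Q\xi=0$, both $\phi$-terms drop out and the relation collapses to $Yf-\eta(Y)(\xi f)=(\xi\lambda)\eta(Y)-Y\lambda$, i.e.
$$Y(f+\lambda)=\bigl(\xi(f+\lambda)\bigr)\,\eta(Y)\qquad\text{for every vector field }Y,$$
so that $\mathrm{grad}(f+\lambda)=\psi\,\xi$ with $\psi:=\xi(f+\lambda)$. It then suffices to prove $\psi\equiv0$. For this I would differentiate, using $\nabla_Y\xi=\phi Y$, to get $\nabla_Y\,\mathrm{grad}(f+\lambda)=(Y\psi)\xi+\psi\,\phi Y$; pairing with a second field $Z$ and using that the Hessian of $f+\lambda$ is symmetric while $\phi$ is $g$-skew yields $2\psi\,g(\phi Y,Z)=(Z\psi)\eta(Y)-(Y\psi)\eta(Z)$. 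Restricting $Y,Z$ to the contact distribution $\mathcal D$ annihilates the right-hand side, so $\psi\,g(\phi Y,Z)=0$ on $\mathcal D$; taking $Z=\phi Y$ and using $g(\phi Y,\phi Y)=g(Y,Y)>0$ for $0\neq Y\in\mathcal D$ forces $\psi=0$ at every point. Hence $d(f+\lambda)=0$, and $M$ being connected, $\lambda+f$ is constant.

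I expect the last step to be the real obstacle: contracting with $\xi$ only shows that $\mathrm{grad}(f+\lambda)$ points along the Reeb direction, and upgrading this to the constancy of $f+\lambda$ needs the extra differentiation together with the non-degeneracy of $\phi$ on $\mathcal D$ — equivalently the contact condition on $\eta$ that was already invoked in Section~2. A secondary point requiring care is the sign bookkeeping in evaluating $g((\nabla_XQ)Y,\xi)$; with the convention $\nabla_X\xi=\phi X$ of Section~2 the terms assemble as written, and in any event they vanish once $X=\xi$, so the conclusion is unaffected by that choice.
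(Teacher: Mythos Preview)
Your argument is correct and in fact cleaner than the paper's. Both proofs begin the same way, contracting the curvature identity of the Lemma against the Reeb direction and invoking $R(X,Y)\xi=\eta(Y)X-\eta(X)Y$ together with $Q\xi=2n\,\xi$. The paper, however, keeps $X$ and $Y$ arbitrary and then attempts to cancel the $(\nabla_XQ)$--terms by the symmetrisation ``interchange $X\leftrightarrow Y$ and add'' (equations~(3.22)--(3.24)), concluding from $[X(\lambda+f)-Y(\lambda+f)][\eta(Y)-\eta(X)]=0$ and the generic nonvanishing of $\eta(Y)-\eta(X)$. Your route is different: you immediately specialise $X=\xi$, which kills the Ricci--operator contributions via $\phi\xi=0$ and $Q\xi=2n\,\xi$, but pays for this economy by only obtaining $\mathrm{grad}(f+\lambda)=\psi\,\xi$; you then supply the extra step---symmetry of the Hessian versus skewness of $\phi$ on the contact distribution $\mathcal D$---to force $\psi=0$. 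What your approach buys is rigour: you treat $g((\nabla_XQ)Y,\xi)$ as the tensorial quantity it is (the paper's display~(3.20) writes it as a scalar $(\nabla_XQ)\eta(Y)$, which makes the passage to~(3.22) formally problematic), and your endgame uses the nondegeneracy of $d\eta$ on $\mathcal D$ rather than the pointwise claim ``$\eta(X)\neq\eta(Y)$ for all $X,Y$'', which as stated is of course false. The paper's approach, when read charitably, is a one--step symmetrisation; yours trades that for a two--step argument that is fully justified.
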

\begin{proof}
  Let us consider that a Sasakian manifold which satisfies the almost conformal gradient Ricci soliton equation, with the potential function $f$. Then from the  equation $(3.10)$ of the lemma $3.2$ the Riemannian curvature tensor of the manifold satisfies the relation
  \begin{equation}
    R(X,Y)Df=(X\lambda)Y-(Y\lambda)X-(\nabla_XQ)Y+(\nabla_YQ)X.
  \end{equation}
  Taking inner product on both sides of equation $(3.17)$ with an arbitrary vector field $Z$ gives us,
  \begin{eqnarray}
    g(R(X,Y)Df,Z) &=& (X\lambda)g(Y,Z)-(Y\lambda)g(X,Z) \nonumber\\
     &&-(\nabla_XQ)g(Y,Z)+(\nabla_YQ)g(X,Z).
  \end{eqnarray}
  Now since from the symmetry of $R$ we know that $g(R(X,Y)Df,Z)=\\
  -g(R(X,Y)Z,Df)$ and then from the above equation $(3.18)$ we get,
  \begin{eqnarray}
    g(R(X,Y)Z,Df) &=& (\nabla_XQ)g(Y,Z)-(\nabla_YQ)g(X,Z) \nonumber\\
     &&-(X\lambda)g(Y,Z)+(Y\lambda)g(X,Z).
  \end{eqnarray}
  Putting $Z=\xi$ in equation $(3.19)$ we have,
  \begin{eqnarray}
    g(R(X,Y)\xi,Df) &=& (\nabla_XQ)\eta(Y)-(\nabla_YQ)\eta(X) \nonumber\\
     &&-(X\lambda)\eta(Y)+(Y\lambda)\eta(X),
  \end{eqnarray}
   $\forall X,Y\in TM$, where $TM$ is the tangent bundle of the manifold.\\
  Again as the given manifold is Sasakian, from equation $(2.4)$ it can be easily obtained that, the curvature tensor sastisfies $R(X,Y)\xi=\eta(Y)X-\eta(X)Y, \forall X,Y\in TM$. Then taking an inner product of the resulting equation with the vector field $Df$ we can obtain,
 \begin{equation}
    g(R(X,Y)\xi,Df)=\eta(Y)(Xf)-\eta(X)(Yf),
  \end{equation}
 $\forall X,Y\in TM$. Now combining the equations $(3.20)$ and $(3.21)$ and after simplification we get,
 \begin{equation}
   X(\lambda+f)[\eta(Y)-\eta(X)]=(\nabla_XQ)\eta(Y)-(\nabla_YQ)\eta(X).
 \end{equation}
 Interchanging $X$ and $Y$ in $(3.22)$ yields,
 \begin{equation}
   Y(\lambda+f)[\eta(X)-\eta(Y)]=(\nabla_YQ)\eta(X)-(\nabla_XQ)\eta(Y).
 \end{equation}
 Finally, adding the equations $(3.22)$ and $(3.23)$ we arrive at,
  \begin{equation}
   [X(\lambda+f)-Y(\lambda+f)][\eta(Y)-\eta(X)]=0,
 \end{equation}
 $\forall X,Y\in TM$. Again as $\eta(X)\neq\eta(Y)$ $\forall X,Y\in TM$, from the above equation $(3.24)$ we can conclude that $D(\lambda+f)=0$. Therefore $(\lambda+f)$ is constant and hence completes the proof.
\end{proof}
\par
\medskip
In particular if we take $\lambda$ to be constant, i.e; for the case of conformal gradient Ricci soliton, from the above we get $f$ is constant
\begin{cor}
  Let $(M^{2n+1},g)$ be a Sasakian manifold satisfying the conformal gradient Ricci soliton equation, then the potential function $f$ of the soliton is a constant function.
\end{cor}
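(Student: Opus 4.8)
The plan is to obtain this Corollary as an immediate specialization of Theorem 3.2, with no new computation required. First I would observe that a \emph{conformal} gradient Ricci soliton is precisely the almost conformal gradient Ricci soliton of equation $(1.8)$ in the particular case in which the smooth function $\lambda$ is a (real) constant; in other words, the hypotheses of Theorem 3.2 are satisfied by $(M^{2n+1},g)$ exactly as stated. Applying that theorem then yields that $\lambda+f$ is a constant function on $M^{2n+1}$, equivalently $D(\lambda+f)=0$, where $D=\mathrm{grad}$.

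Next I would simply subtract off $\lambda$: since $\lambda$ is constant we have $D\lambda=0$, and therefore $Df=D(\lambda+f)-D\lambda=0$. Finally, taking $M^{2n+1}$ to be connected (as is implicit throughout the paper), the vanishing of the gradient $Df\equiv 0$ forces $f$ to be constant, which is exactly the assertion of the Corollary. This also matches, and makes precise, the informal remark immediately preceding the statement.

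I do not expect any real obstacle here, since all of the analytic and geometric content has already been absorbed into Lemma 3.2 and Theorem 3.2 (the curvature identity $R(X,Y)Df=(X\lambda)Y-(Y\lambda)X-(\nabla_XQ)Y+(\nabla_YQ)X$ and the use of the Sasakian relation $R(X,Y)\xi=\eta(Y)X-\eta(X)Y$ together with $Ric(\xi,Z)=2n\,g(\xi,Z)$). The only two points worth spelling out carefully are: (i) the identification of the conformal gradient Ricci soliton equation with the $\lambda=\text{const}$ case of $(1.8)$, so that Theorem 3.2 is applicable verbatim; and (ii) the (standard) use of connectedness of $M$ to pass from $Df=0$ to the constancy of $f$.
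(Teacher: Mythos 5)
Your proposal is correct and coincides with the paper's own (very brief) justification: the Corollary is obtained by specializing Theorem 3.2 to constant $\lambda$, so that $D f = D(\lambda+f) - D\lambda = 0$ and hence $f$ is constant on the (connected) manifold. Your two added remarks --- identifying the conformal gradient Ricci soliton equation with the constant-$\lambda$ case of $(1.8)$, and invoking connectedness to pass from $Df=0$ to constancy --- are exactly the points the paper leaves implicit, so nothing is missing.
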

\par
\medskip
Now we prove a sufficient condition for an almost conformal Ricci soliton to be an almost conformal gradient Ricci soliton using some condition on the potential vector field. So, for this purpose we need a special kind of vector field, called concurrent vector field \cite{ChD}, which is defined as follows
\begin{defn}
  A vector field $V$ on a Riemannian $n$-manifold $M^n$ is said to be a concurrent vector field if it satisfies the equation
  \begin{equation}
  \nabla_XV=X,
  \end{equation}
  for all smooth vector fields $X$ on $M^n$.
\end{defn}
\par
\medskip
K. Yano and B. Y. Chen \cite{YC} in 1971 proved that, if the holonomy group of a Riemannian $n$-manifold $M^n$ leaves a point invariant, then there exists a concurrent vector field $V$ on $M^n$. Recently, in \cite{ChD}, the authors gave a classification of Ricci solitons on a Riemannian manifold equipped with concurrent potential vector field. Subsequently, many mathematicians have studied concurrent vector fields in Riemannian geometry. Now we will discuss our final result as follows
\begin{thm}
Let $(M^{2n+1},g)$ be a Sasakian manifold admitting an almost conformal Ricci soliton $(V,\lambda)$, with concurrent potential vector field $V$, then the soliton becomes an almost conformal gradient Ricci soliton and the manifold becomes an Einstein manifold. Moreover, the soliton is shrinking, steady or expanding according as the conformal pressure $p$ satisfies the relations $(mp+2m^2+2)>0$, $(mp+2m^2+2)=0$ or $(mp+2m^2+2)<0$; where $m=(2n+1)$ is the dimension of the manifold.
\end{thm}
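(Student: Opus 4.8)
The plan is to use the concurrency hypothesis $\nabla_X V=X$ twice: once to force the metric to be Einstein, and once to exhibit $V$ as a gradient field. First I would compute the Lie derivative of $g$ along $V$. Since $(\mathcal{L}_V g)(X,Y)=g(\nabla_X V,Y)+g(X,\nabla_Y V)$ and $\nabla_X V=X$, we get $(\mathcal{L}_V g)(X,Y)=2g(X,Y)$. Substituting this into the almost conformal Ricci soliton equation $(1.7)$ (with the dimension $n$ there replaced by $m=2n+1$, exactly as in the proof of Theorem $3.1$) yields $2g+2Ric=[2\lambda-(p+\tfrac{2}{m})]g$, that is,
\[
Ric=\Bigl(\lambda-\frac{p}{2}-\frac{1}{m}-1\Bigr)g .
\]
Hence $(M^{2n+1},g)$ is an Einstein manifold with Einstein constant $\mu:=\lambda-\frac{p}{2}-\frac{1}{m}-1$.

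Next I would produce the potential function. Set $f=\tfrac12\,g(V,V)$. For every vector field $X$ we have $X(f)=g(\nabla_X V,V)=g(X,V)$, so $\nabla f=V$; thus $V$ is a gradient vector field and the soliton is an almost conformal gradient Ricci soliton with potential function $f$. (A concurrent potential field is automatically gradient, which is exactly the first assertion of the theorem.)

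Then I would pin down $\mu$ using the Sasakian structure. By $(2.6)$ together with $g(\xi,\xi)=1$ we have $Ric(\xi,\xi)=2n=m-1$, while the Einstein condition gives $Ric(\xi,\xi)=\mu$. Equating these, $\lambda-\frac{p}{2}-\frac{1}{m}-1=m-1$, hence
\[
\lambda=m+\frac{p}{2}+\frac{1}{m}.
\]
Multiplying this identity by $2m>0$ gives $2m\lambda=2m^2+mp+2=mp+2m^2+2$, so $\lambda$ has the same sign as $mp+2m^2+2$. Therefore the soliton is shrinking, steady, or expanding according as $mp+2m^2+2$ is positive, zero, or negative, respectively, which is the remaining assertion.

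I do not anticipate a genuine obstacle here: the whole argument is a short, essentially algebraic computation built on two uses of $\nabla_X V=X$. The only points demanding care are the bookkeeping of the dimensional term $\tfrac{2}{m}$ in $(1.7)$ and the consistent use of $(2.6)$ with $g(\xi,\xi)=1$ (so that $Ric(\xi,\xi)=2n$, not $2n+1$), since it is precisely this evaluation at $\xi$ that converts the Einstein constant into the explicit expression for $\lambda$ in terms of the conformal pressure $p$.
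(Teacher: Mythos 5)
Your proposal is correct and follows essentially the same route as the paper: concurrency gives $\mathcal{L}_Vg=2g$ and hence the Einstein condition, $f=\tfrac12 g(V,V)$ furnishes the gradient potential, and evaluating the Einstein equation on $\xi$ against $Ric(\xi,\xi)=2n$ yields $\lambda=m+\tfrac{p}{2}+\tfrac{1}{m}$. Your observation that $2m\lambda=mp+2m^2+2$ even makes explicit the sign equivalence that the paper leaves implicit.
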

\begin{proof}
Let us consider a Sasakian manifold $(M^{2n+1},g)$ admitting an almost conformal Ricci soliton $(V,\lambda)$, with concurrent potential vector field $V$, then we have
\begin{equation}
(\mathcal{L}_{V}g)(X,Y)+2Ric(X,Y)=[2\lambda -(p+\frac{2}{2n+1})]g(X,Y),
\end{equation}
for all smooth vector fields $X,Y$ on $M^{2n+1}$. Again since the vector field $V$ is concurrent, so it satisfies the equation $(3.25)$. Therefore recalling the definition of Lie derivative and in view of equation $(3.25)$ we deduce that
\begin{eqnarray}
(\mathcal{L}_{V}g)(X,Y) &=& g(\nabla_XV,Y)+g(X,\nabla_YV) \nonumber\\
                        &=& 2g(X,Y).
\end{eqnarray}
Now plugging in the value of $\mathcal{L}_{V}g$ from $(3.27)$, the equation $(3.26)$ reduces to
\begin{equation}
Ric(X,Y)=[(\lambda -1)-(\frac{p}{2}+\frac{1}{2n+1})]g(X,Y),
\end{equation}
for all smooth vector fields $X,Y$ on $M^{2n+1}$. This proves that the manifold becomes an Einstein manifold.
\par
\medskip
Further, if we set, $f:=\frac{1}{2}g(V,V)$; then one can easily compute that;
\begin{equation}
Hess^f(X,Y)=g(X,Y),
\end{equation}
for all smooth vector fields $X,Y$ on $M^{2n+1}$. Therefore in view of the equations $(3.28)$ and $(3.29)$ we can conclude that
\begin{equation}
Hess^f(X,Y)+Ric(X,Y)=[\lambda-(\frac{p}{2}+\frac{1}{2n+1})]g(X,Y),
\end{equation}
for all smooth vector fields $X,Y$ on $M^{2n+1}$. This proves that the soliton is an almost conformal gradient Ricci soliton, with the above defined function $f$ as the potential function.
\par
\medskip
Now let us consider an orthonormal basis of the Sasakian manifold $M^{2n+1}$ as $\{e_i:1\leq i\leq(2n+1)\}$. Taking an inner-product of the equation $(2.4)$ with an arbitrary vector field $Y$, then setting $Y=e_i$ and summing over $1\leq i\leq(2n+1)$, we obtain
\begin{equation}
Ric(X,\xi)=2n\eta(X).
\end{equation}
Again taking $Y=\xi$ in the equation $(3.28)$ yields
\begin{equation}
Ric(X,\xi)=[(\lambda -1)-(\frac{p}{2}+\frac{1}{2n+1})]\eta(X).
\end{equation}
Thus comparing $(3.31)$ and $(3.32)$ we finally arrive at $\lambda=(2n+1)+(\frac{p}{2}+\frac{1}{2n+1})$. Hence from this value of $\lambda$ and the fact that the soliton is shrinking, steady or, expanding according as; $\lambda >0$, $\lambda =0$ or, $\lambda <0$; completes the proof.
\end{proof}
\par
\medskip
\textbf{Example:} Let us consider the $5$-dimensional manifold $M=\{(u_1,u_2,v_1,v_2,w)\in\mathbb{R}^5\}$. Define a set of vector fields
$\{e_i: 1\leq i\leq 5\}$ on the manifold $M$ given by
\begin{equation}
e_1=2(u_2\frac{\partial}{\partial v_1}-\frac{\partial}{\partial u_1}), e_2=\frac{\partial}{\partial u_2}, e_3=-2\frac{\partial}{\partial v_1}, e_4=2(w\frac{\partial}{\partial v_1}-\frac{\partial}{\partial v_2}), e_5=-2\frac{\partial}{\partial w}.\nonumber
\end{equation}
Let us define the Riemannian metric $g$ on $M$ by
\begin{equation}
g(e_i,e_j)=\left\{ \begin{array}{rcl}
1, & \mbox{for}
& i=j \\ 0, & \mbox{for} & i\neq j
\end{array}\right.\nonumber
\end{equation}
for all $i,j=1,2,3,4,5$. Now considering $e_5=\xi$, let us take the $1$-form $\eta$, on the manifold $M$, defined by
\begin{equation}
\eta(U)=g(U,e_3)=g(U,\xi),~~~\forall U\in TM.\nonumber
\end{equation}
Then it can be observed that $\eta(e_3)=1$. Let us define the $(1,1)$ tensor field $\phi$ on $M$ as
\begin{equation}
\phi(e_1)=e_2, ~~\phi(e_2)=-e_1,~~\phi(e_3)=0,~~e_4=e_5,~~\phi(e_5)=-e_4.\nonumber
\end{equation}
Then using the linearity of $g$ and $\phi$ it can be easily checked that
\begin{equation}
\phi^2(U)=-U+\eta(U)\xi,~~g(\phi U,\phi V)=g(U,V)-\eta(U)\eta(V),~~~\forall U,V\in TM.\nonumber
\end{equation}
Hence the structure $(\phi, \xi, \eta, g)$ defines an almost contact structure on $M$.\\
Now, using the definitions of Lie bracket, direct computations give us \\
$[e_1,e_2]=[e_4,e_5]=2e_3$ and for all other $i,j$'s the term $[e_i,e_j]$ vanishes.\\
Again the Riemannian connection $\nabla$ of the metric $g$ is defined by the well-known Koszul's formula which is given by
\begin{eqnarray}
  2g(\nabla_XY,Z) &=& Xg(Y,Z)+Yg(Z,X)-Zg(X,Y) \nonumber\\
   && -g(X,[Y,Z])+g(Y,[Z,X])+g(Z,[X,Y]).\nonumber
\end{eqnarray}
Using the above formula one can easily calculate that \\
$\nabla_{e_1}e_2=\nabla_{e_4}e_5=e_3$, $\nabla_{e_1}e_3=\nabla_{e_3}e_1=-e_2$, $\nabla_{e_2}e_1=-e_3$, $\nabla_{e_2}e_3=\nabla_{e_1}e_2=e_1$, $\nabla_{e_3}e_5=e_4$, $\nabla_{e_4}e_3=-e_5$ and for all other $i,j$'s the term $\nabla_{e_i}e_j$ vanishes.\\
 Thus it follows that $(\nabla_X\phi)Y=g(X,Y)\xi-\eta(Y)X,~~~\forall X,Y\in TM$. Therefore the manifold $(M,g)$ is a $5$-dimensional Sasakian manifold.\\
Now using the well-known formula $R(X,Y)Z=\nabla_X\nabla_YZ-\nabla_Y\nabla_XZ-\nabla_{[X,Y]}Z$ the non-vanishing components of the Riemannian curvature tensor $R$ can be easily obtained as
\begin{eqnarray}
  R(e_1,e_2)e_1 &=& 3e_2, R(e_2,e_3)e_1=-e_3, R(e_2,e_3)e_2=-e_3, R(e_2,e_4)e_1=-e_5, \nonumber\\
  R(e_1,e_2)e_2 &=& -e_1, R(e_1,e_2)e_5=-2e_4, R(e_2,e_4)e_5=e_1, R(e_2,e_5)e_1=e_4, \nonumber\\
  R(e_1,e_3)e_1 &=& -e_3, R(e_1,e_3)e_3=e_1, R(e_3,e_4)e_3=-e_4, R(e_4,e_5)e_1=2e_2, \nonumber\\
  R(e_1,e_4)e_2 &=& e_5, R(e_1,e_4)e_5=-e_2, R(e_4,e_5)e_2=-2e_1, R(e_4,e_5)e_4=2e_5, \nonumber\\
  R(e_1,e_5)e_4 &=& e_2, R(e_4,e_5)e_5=-2e_4.\nonumber
\end{eqnarray}
From the above values of the curvature tensor, we obtain the components of the Ricci tensor as follows
\begin{equation}
S(e_1,e_1)=-2, S(e_2,e_2)=3, S(e_3,e_3)=4, S(e_4,e_4)=4, S(e_5,e_5)=-1.
\end{equation}
Therefore taking $X=e_3=\xi$ in the conformal Ricci soliton equation $(1.8)$ and then tracing it out, in view of $(3.33)$ we can calculate $\lambda=(\frac{p}{2}+\frac{9}{5})$. Hence we can conclude that for $\lambda=(\frac{p}{2}+\frac{9}{5})$ the data $(\xi,\lambda)$ defines an almost conformal Ricci soliton on the $5$-dimensional Sasakian manifold $(M,g,\phi,\xi,\eta)$.
\par
\medskip
\medskip
\textbf{Conclusion and remarks:} Soliton solutions are a special class of solutions which play an important role to study the singularities of geometric flows appearing as possible models of singularity. Here we have proved an important condition for the potential vector field of the soliton to be constant in terms of the non-dynamical scalar field. In conformal geometry, conformal Ricci solitons play a vital role as they are natural generalization of the Einstein metric. Moreover, presently the geometric flows are the most effective means to study the geometry of relativistic perfect fluid spacetime. In this paper, the effect of almost conformal gradient Ricci solitons have been studied within the framework of Sasakian manifolds, which is a very important class of contact manifold having extensive use in relativity and mathematical physics. Hence it is interesting to investigate conformal Ricci soliton and various other geometric flows like Yamabe flow, on Sasakian manifolds as well as in other contact and complex manifold structures.\par
\medskip
\medskip
\medskip
\medskip
\textbf{Acknowledgement:} The first author D. Ganguly is thankful to the National Board for Higher Mathematics (NBHM),India, for their financial support(Ref No: 0203/11/2017/RD-II/10440) to carry on this research work.
\medskip
\medskip
\medskip
\medskip
\medskip

\end{document}